\newtheorem{theorem}{Theorem}
\newtheorem{lemma}{Lemma}
\theoremstyle{definition}
\newcommand{\beql}[1]{\begin{equation}\label{#1}}
\newcommand{\eeq}{\end{equation}}
\newcommand{\comment}[1]{}
\newcommand{\Abs}[1]{{\left|{#1}\right|}}
\newcommand{\Set}[1]{{\left\{{#1}\right\}}}
\newcommand{\RR}{{\mathbb R}}
\newcommand{\CC}{{\mathbb C}}
\newcommand{\ZZ}{{\mathbb Z}}
\newcommand{\inner}[2]{{\langle #1, #2 \rangle}}
\newcommand{\Span}{{\rm span\,}}
\newcommand{\ft}[1]{\widehat{#1}}
\newcounter{rem}
\newcounter{step}
\newcounter{mysec}
\newcounter{mysubsec}[mysec]
\begin{document}

\title{Periodicity of the spectrum of a finite union of intervals}

\author{Mihail N. Kolountzakis}\thanks{Supported by research grant No 3223 from the Univ.\ of Crete}

\date{February 2011}

\address{M.K.: Department of Mathematics, University of Crete, Knossos Ave.,
GR-714 09, Iraklio, Greece} \email{kolount@gmail.com}

\maketitle

\begin{abstract}
A set $\Omega$, of Lebesgue measure 1,
in the real line is called spectral if there is a set $\Lambda$ of real numbers
such that the exponential functions $e_\lambda(x) = \exp(2\pi i \lambda x)$ form
a complete orthonormal system on $L^2(\Omega)$. Such a set $\Lambda$ is called a spectrum of $\Omega$.
In this note we present a simplified proof of the fact
that any spectrum $\Lambda$ of a set $\Omega$ which is finite union of intervals
must be periodic. The original proof is due to Bose and Madan.
\end{abstract}

\noindent{\bf Keywords:} Spectral sets; Fuglede's Conjecture.

\noindent{\bf AMS Primary Classification:} 42B99

\section{Introduction and statement of the result}

Let $\Omega \subseteq \RR^d$ be a bounded measurable set of Lebesgue measure 1.
A set $\Lambda \subseteq \RR^d$
is called a spectrum of $\Omega$ (and $\Omega$ is said to be a spectral set) if the set of exponentials
$$
E(\Lambda) = \Set{e_\lambda(x)=e^{2\pi i \lambda\cdot x}:\ \lambda\in\Lambda}
$$
is a complete orthonormal set in $L^2(\Omega)$.
(The inner product in $L^2(\Omega)$ is $\inner{f}{g} = \int_\Omega f \overline{g}$.)

It is easy to see (see, for instance, \cite{kolountzakis2004milano}) that the orthogonality of $E(\Lambda)$
is equivalent to the {\em packing condition}
\beql{packing-condition}
\sum_{\lambda\in\Lambda}\Abs{\ft{\chi_\Omega}}^2(x-\lambda) \le 1,\ \ \mbox{a.e. ($x$)},
\eeq
as well as to the condition
\beql{zeros-condition}
\Lambda-\Lambda \subseteq \Set{0} \cup \Set{\ft{\chi_\Omega}=0}.
\eeq
The completeness of $E(\Lambda)$ is in turn equivalent to the {\em tiling condition}
\beql{tiling-condition}
\sum_{\lambda\in\Lambda}\Abs{\ft{\chi_\Omega}}^2(x-\lambda) = 1,\ \ \mbox{a.e. ($x$)}.
\eeq
These equivalent conditions follow from the identity
\beql{inner}
\inner{e_\lambda}{e_\mu} = \int_\Omega e_\lambda \overline{e_\mu} = \ft{\chi_\Omega}(\lambda-\mu)
\eeq
and from the completeness of all the expontials in $L^2(\Omega)$.

{\em Example:} If $Q_d = (-1/2, 1/2)^d$ is the cube of
unit volume in $\RR^d$ then
$\ZZ^d$ is a spectrum of $Q_d$.

In the one dimensional case, which will concern us in this paper,
condition \eqref{zeros-condition}
implies that the set $\Lambda$ has gaps bounded below by a positive number, the smallest
zero of $\ft{\chi_\Omega}$.

Research on spectral sets has been driven for many years by a conjecture of Fuglede
\cite{fuglede1974operators} which stated that a set $\Omega$ is spectral if and only
if it is a translational tile. A set $\Omega$ is a translational tile if
we can translate copies of $\Omega$ around and fill space without overlaps.
More precisely there exists a set $S \subseteq \RR^d$ such that
\beql{tiling}
\sum_{s\in S} \chi_\Omega(x-s) = 1,\ \ \mbox{a.e. ($x$)}.
\eeq
This conjecture is now known to be false in both directions if $d\ge 3$
\cite{tao2004fuglede,matolcsi2005fuglede4dim,kolountzakis2006hadamard,kolountzakis2006tiles,farkas2006onfuglede,farkas2006tiles}
and both directions are still open in dimensions $d=1,2$.

In this paper we present a new proof of the periodicity of the spectrum,
which is a considerable simplification of that in \cite{bose2010spectrum}.
\begin{theorem}[Bose and Madan \cite{bose2010spectrum}]\label{th:main}
If $\Omega = \bigcup_{j=1}^n (a_j, b_j) \subseteq \RR$ is a finite union of intervals of total
length $1$ and $\Lambda \subseteq \RR$ is a spectrum of $\Omega$ then there exists a positive integer
$T$ such that $\Lambda+T=\Lambda$.
\end{theorem}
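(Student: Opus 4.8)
The plan is to exploit the explicit form of $\ft{\chi_\Omega}$ together with the two conditions \eqref{zeros-condition} and \eqref{tiling-condition}. First I would write
\[
\ft{\chi_\Omega}(t) = \frac{1}{2\pi i t}\sum_{j=1}^n\left(e^{-2\pi i a_j t}-e^{-2\pi i b_j t}\right) =: \frac{F(t)}{2\pi i t},
\]
so that, away from the origin, the zero set of $\ft{\chi_\Omega}$ coincides with the zero set $Z$ of the exponential sum $F$. Condition \eqref{zeros-condition} then reads $\Lambda-\Lambda\subseteq\Set{0}\cup Z$. From \eqref{tiling-condition}, integrating over $[-R,R]$ and using $\int\Abs{\ft{\chi_\Omega}}^2=\Abs{\Omega}=1$, one gets $\dens\Lambda=1$; the gap bound from \eqref{zeros-condition} gives that $\Lambda$ is uniformly discrete, and relative denseness of a spectrum follows as well, so $\Lambda$ is a Delone set. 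I record here a reduction that makes the endgame painless: if $\Lambda+T=\Lambda$ for some $T>0$, then the number of points of $\Lambda$ in one period equals $T\cdot\dens\Lambda=T$, which is an integer; hence any period is automatically a positive integer, and it suffices to produce a single period.

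Next I would linearize the transcendental set $Z$. Let $G\subseteq\RR$ be the finitely generated group generated by the differences of the endpoints $\Set{a_j,b_j}$, and fix a $\QQ$-basis $g_1,\dots,g_r$ of $G$. Writing each frequency of $F$ (after factoring out $e^{-2\pi i a_1 t}$) as an integer combination of the $g_l$, one obtains $F(t)=e^{-2\pi i a_1 t}\,Q\!\left(\rho(t)\right)$, where $\rho(t)=\left(e^{-2\pi i g_1 t},\dots,e^{-2\pi i g_r t}\right)$ and $Q$ is a (nonzero) trigonometric Laurent polynomial on the torus $\TT^r$. By Kronecker's theorem the one-parameter subgroup $\rho(\RR)$ is dense in $\TT^r$, and $Z=\rho^{-1}(V)$ with $V=\Set{Q=0}$ a proper real-analytic subvariety, in particular a set of Haar measure zero.

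The heart of the argument is to convert the confinement $\Lambda-\Lambda\subseteq\Set{0}\cup\rho^{-1}(V)$ into an honest period. Let $P=\overline{\rho(\Lambda)}\subseteq\TT^r$. Since $\rho$ is a homomorphism, $\rho(\Lambda)-\rho(\Lambda)\subseteq V\cup\Set{0}$, and passing to closures gives $P-P\subseteq V\cup\Set{0}$, a measure-zero set; by the Steinhaus theorem $P$ cannot have positive Haar measure, so $P\ne\TT^r$. I would then show that $P$ is contained in a coset of a proper closed subgroup $H\lneq\TT^r$: heuristically, the ``incommensurate'' directions must be used up, because a relatively dense $\Lambda$ whose torus image spread out transversally to every subtorus would force $P-P$ to meet the complement of $V$. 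The subgroup $H$ records a rational resonance among the $g_l$ along $\Lambda$, and translating this resonance back to $\RR$ should yield a real number $T$ under which $\rho(\Lambda)$ is invariant and, after using relative denseness to control the finitely many residual cosets, $\Lambda+T=\Lambda$.

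The step I expect to be the main obstacle is precisely this last one: passing from the statement that all differences of $\Lambda$ avoid the open set $\TT^r\setminus(V\cup\Set{0})$ to an exact translational symmetry of the subset $\Lambda\subseteq\RR$. The difficulty is that a measure-zero difference set on the torus does not by itself force $P$ into a subgroup coset for an arbitrary variety $V$; one must use both that $V$ is the zero set of the specific sum $F$ and, crucially, that $\Lambda$ \emph{tiles} (not merely packs) at density exactly $1$, which is what should rule out the genuinely quasi-periodic, incommensurate configurations and single out a commensurate periodic backbone. Once periodicity is established, the reduction noted in the first paragraph upgrades the period to a positive integer, completing the proof.
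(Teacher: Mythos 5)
Your proposal contains a genuine gap, and you have in fact located it yourself: the passage from ``$P-P\subseteq V\cup\Set{0}$, a Haar-null subset of $\TT^r$'' to an exact translational symmetry of $\Lambda$ is not a technical obstacle to be smoothed over later --- it is essentially the entire content of the theorem, and no argument for it is given. The heuristic that $P$ must lie in a coset of a proper closed subgroup does not follow from the measure-zero confinement alone: cut-and-project (model) sets are relatively dense, uniformly discrete sets of well-defined density whose difference sets are confined to measure-zero ``windows'' in a torus embedding, and they are genuinely aperiodic. So any completion of your route must inject the tiling condition \eqref{tiling-condition} in a quantitative way at precisely this point, and the proposal offers no mechanism for doing so. A secondary issue is that even granting $P\subseteq x_0+H$ for a proper closed subgroup $H$, you would only learn that $\rho(\Lambda+T)=\rho(\Lambda)$ as subsets of the torus for suitable $T$ with $\rho(T)\in H^\perp$-type relations; upgrading this to the exact set identity $\Lambda+T=\Lambda$ in $\RR$ requires an argument that a point of $\RR$ orthogonal to everything in $\Lambda$ must itself belong to $\Lambda$, i.e.\ it requires completeness of $E(\Lambda)$, which your sketch never invokes at this stage. (Your preliminary reductions --- density $1$, uniform discreteness, and the observation that any period must be an integer --- are correct and coincide with remarks in the paper.)

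For contrast, the paper's proof avoids the torus entirely and runs through finite-dimensional linear algebra: the map $\phi(x)=(e^{-2\pi i a_1x},\dots,e^{-2\pi i b_nx})\in\CC^{2n}$ turns orthogonality $e_\lambda\perp e_\mu$ into the vanishing of a fixed bilinear form $A(\phi(\lambda),\phi(\mu))$, and completeness implies that $\Lambda$ is \emph{determined} by any finite $B\subseteq\Lambda$ with $\Span\phi(B)=\Span\phi(\Lambda)$: a point $x$ lies in $\Lambda$ if and only if $A(\phi(x),\phi(b))=0$ for all $b\in B$. A compactness (diagonal) argument, using a tail estimate on $\sum\Abs{\ft{\chi_\Omega}}^2$ to show that a limit of translated windows of $\Lambda$ is again a spectrum, shows that some fixed window length $T$ always captures a generating set; the finite-complexity pigeonhole then produces two windows $B_{\lambda_1}=B_{\lambda_2}+(\lambda_1-\lambda_2)$, and the ``determination'' principle converts this into the period $\lambda_1-\lambda_2$. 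The two ingredients your sketch is missing are exactly these: a finite object that determines all of $\Lambda$, and a pigeonhole that forces two such objects to be translates of one another.
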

This is the spectral analogue of a result \cite{lagarias1996tiling,kolountzakis1996structure}
which states that all translational tilings by a bounded measurable set (or by a compactly supported
function) are necessarily periodic. The proof of Theorem \ref{th:main} is given in the next section.

\section{Proof of the periodicity of the spectrum}

Let us observe first, as in \cite{bose2010spectrum}, that the spectrum
$\Lambda = \Set{\lambda_j:\ j\in\ZZ}$, $\lambda_j<\lambda_{j+1}$,
of any bounded set $\Omega \subseteq \RR$
has ``finite complexity'', in the sense that all gaps $\lambda_{j+1}-\lambda_j$ are
drawn from the discrete set ($\ft{\chi_\Omega}$ is analytic as $\chi_\Omega$
has bounded support) $\Set{\ft{\chi_\Omega}=0}$.
This implies that if we consider all
intesections of $\Lambda$ with a sliding window of width $h$
$$
[\lambda, \lambda+h] \cap \Lambda,\ \ \mbox{(where $\lambda\in\Lambda$)},
$$
then we only see finitely many different sets.

If $\Omega = \bigcup_{j=1}^n (a_j, b_j) \subseteq \RR$ it follows by a simple calculation that
\beql{ft}
\ft{\chi_\Omega}(\xi) = \frac{1}{2\pi i \xi}\sum_{j=1}^n \left( e^{-2\pi i a_j \xi} - e^{-2\pi i b_j \xi} \right).
\eeq

The important ingredient of the approach in \cite{bose2010spectrum} that we keep in our approach
is the view of the spectrum as a linear space via the map
$\phi = \phi_\Omega:\RR\to\CC^{2n}$ given by
$$
x \to (e^{-2\pi i a_1 x}, \ldots, e^{-2\pi i a_n x}, e^{-2\pi i b_1 x}, \ldots, e^{-2\pi i b_n x}).
$$
Define the bilinear form $A$ on $\CC^{2n}$ by (writing $z=(z_1, z_2)$, $z_1, z_2 \in \CC^n$)
$$
A(z,w) = \inner{z_1}{w_1}-\inner{z_2}{w_2},
$$
where $\inner{\cdot}{\cdot}$ is the usual inner product on $\CC^n$. 
Using \eqref{ft} we see that if $\lambda\neq\mu$ then
$$
e_\lambda \perp e_\mu\ \ \mbox{if and only if}\ \ A(\phi(\lambda), \phi(\mu))=0.
$$
Write
$$
V(\Lambda) = \Span \phi(\Lambda)
$$
for the subspace of $\CC^{2n}$ generated by
the set $\phi(\Lambda) = \Set{\phi(\lambda):\ \lambda\in\Lambda}$.

Suppose now that $B=\Set{b_1,\ldots,b_m} \subseteq \Lambda$ is a generating set, i.e., that
$V(\Lambda) = \Span\phi(B)$. It follows that $x \in \Lambda$ if and only if $A(\phi(x), \phi(b_j))$
for $j=1,2,\ldots,m$. Indeed, if the latter condition is true it follows by linearity that
$A(\phi(x),\phi(\mu))=0$ for all $\mu \in \Lambda$
and hence that $e_x \perp e_\mu$, $\mu\in\Lambda\setminus\Set{x}$.
This implies that $x\in\Lambda$, otherwise $E(\Lambda)$ would not be a complete set
of exponentials for $L^2(\Omega)$.
As remarked in \cite{bose2010spectrum} this means that $\Lambda$ is determined by any such generating
set $B$.

\begin{lemma}\label{lm:tails}
Let $\Omega$ be a finite union of intervals.
If $A\subseteq\RR$ is a set of positive minimum gap
$\delta$ then for $R>0$ we have
$$
\sum_{a \in A \atop \Abs{a} > R} \Abs{\ft{\chi_\Omega}}^2(a) \le C/R,
$$
for some constant $C>0$ that may depend on $\Omega$ and $\delta$ only.
\end{lemma}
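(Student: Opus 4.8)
The plan is to combine a pointwise decay estimate for $\ft{\chi_\Omega}$ with a comparison of the resulting sum to a convergent integral, using the separation $\delta$. The whole statement then reduces to an elementary computation.

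First I would read the pointwise bound directly off the explicit formula \eqref{ft}. The numerator $\sum_{j=1}^n \pp{e^{-2\pi i a_j \xi} - e^{-2\pi i b_j \xi}}$ is a sum of $2n$ terms, each of modulus $1$, so it is bounded by $2n$ uniformly in $\xi$. Dividing by $2\pi\Abs{\xi}$ gives $\Abs{\ft{\chi_\Omega}}(\xi) \le \frac{n}{\pi\Abs{\xi}}$, hence $\Abs{\ft{\chi_\Omega}}^2(\xi) \le C_1/\xi^2$ with $C_1 = n^2/\pi^2$, valid for every $\xi \neq 0$. This reduces the claim to estimating $\sum_{a\in A,\ \Abs{a}>R} 1/a^2$ for a $\delta$-separated set $A$.

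Next comes the integral comparison. To each point $a>R$ of $A$ I would associate the interval $(a-\delta, a)$; since consecutive points differ by at least $\delta$, these intervals are pairwise disjoint, they lie in $(R-\delta, \infty)$, and on each of them $1/x^2 \ge 1/a^2$. Therefore $\delta/a^2 \le \int_{a-\delta}^{a} dx/x^2$, and summing over $a>R$ together with the symmetric estimate over $a<-R$ yields $\sum_{\Abs{a}>R} 1/a^2 \le \frac{2}{\delta}\int_{R-\delta}^\infty dx/x^2 = \frac{2}{\delta(R-\delta)}$, provided $R>\delta$. For $R\ge 2\delta$ one has $R-\delta \ge R/2$, which gives the desired bound $C/R$ with $C = 4C_1/\delta = 4n^2/(\pi^2\delta)$.

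It remains to dispose of small $R$. For $0<R<2\delta$ the target $C/R$ is itself bounded below by $C/(2\delta)$, so it suffices to note that the full sum $\sum_{a\in A}\Abs{\ft{\chi_\Omega}}^2(a)$ is finite — this follows from $\Abs{\ft{\chi_\Omega}}\le 1$ (only finitely many points of the $\delta$-separated set $A$ lie near the origin) combined with the same integral comparison for the tail — and then to enlarge $C$ so that it dominates this finite total. I do not expect a genuine obstacle here: the only substantive input is the $1/\Abs{\xi}$ decay of $\ft{\chi_\Omega}$, which is immediate from \eqref{ft}, while the integral comparison and the bookkeeping for small $R$ are routine.
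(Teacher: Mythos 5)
Your proof is correct and follows essentially the same route as the paper, which simply declares the lemma ``immediate'' from the pointwise bound $\Abs{\ft{\chi_\Omega}}^2(y)\le C/\Abs{y}^2$ read off \eqref{ft}; you have merely written out the routine integral-comparison over the $\delta$-separated set and the bookkeeping for small $R$ that the paper leaves implicit.
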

\begin{proof}
This is immediate from the fact that $\Abs{\ft{\chi_\Omega}}^2(y) \le C/\Abs{y}^2$ (see \eqref{ft}).
\end{proof}

\begin{lemma}\label{lm:gen-set}
There is a finite $T>0$ such that for all $x\in\RR$ the set $\Lambda\cap(x, x+T)$ is a generating set.
\end{lemma}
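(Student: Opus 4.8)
The plan is to show that the property of being a generating set depends only on the \emph{local pattern} of $\Lambda$ inside the window, and then to force every pattern to be generating by letting the width grow and ruling out a degenerate limiting configuration.

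First I would record the phase--invariance of $\phi$. Since $\phi(\mu+s)=D(s)\phi(\mu)$, where $D(s)$ is the \emph{unitary} diagonal matrix with entries $e^{-2\pi i a_j s}$ and $e^{-2\pi i b_j s}$, the map $D(s)$ is invertible, so
\[
\dim\Span\phi\bigl(\Lambda\cap(x,x+T)\bigr)
\]
depends only on the set of differences $\Set{\mu-\mu':\ \mu,\mu'\in\Lambda\cap(x,x+T)}$, i.e.\ on the pattern, and not on the position $x$. Consequently $\Lambda\cap(x,x+T)$ is a generating set precisely when this dimension equals $d:=\dim V(\Lambda)$. Because the gaps of $\Lambda$ are bounded (below by the least positive zero of $\ft{\chi_\Omega}$, and above, since otherwise \eqref{tiling-condition} would fail in the middle of a large gap), it suffices to treat windows anchored at points of $\Lambda$, and by finite complexity only finitely many patterns occur at each fixed width.

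Next I would run a monotonicity argument. Put $\rho(T)=\min_{x}\dim\Span\phi(\Lambda\cap(x,x+T))$; this minimum is attained (finitely many patterns), is nondecreasing in $T$ (a wider window contains a narrower one), and is bounded above by $d$. Hence $\rho(T)$ stabilises to some $\rho_\ast\le d$ for all $T\ge T_0$, and the lemma is equivalent to the claim $\rho_\ast=d$. Suppose, for contradiction, that $\rho_\ast<d$. For each $k$ take a \emph{worst} window $W_k$ of width $kT_0$, so $\dim\Span\phi(W_k)=\rho(kT_0)=\rho_\ast$. Cutting $W_k$ into consecutive sub-windows of width $T_0$, each has rank $\ge\rho_\ast$ yet spans a subspace of the (also $\rho_\ast$-dimensional) span of $W_k$; so every such sub-window spans exactly the \emph{same} space, and in particular every point of $W_k$ has its $\phi$--image in a single subspace $S_k$ with $\dim S_k=\rho_\ast$.

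Finally I would pass to a limit. Recentring the blocks $W_k$ at their midpoints and using the bounded, finitely--valued gaps, one extracts along a subsequence a limiting point set $\Gamma\subseteq\RR$. Here Lemma~\ref{lm:tails} is essential: the uniform tail bound (valid because the minimum gap is preserved under translation) prevents mass from escaping to infinity, so the tiling identity \eqref{tiling-condition} passes to the limit and $\Gamma$ is again a spectrum of $\Omega$; moreover all of $\Gamma$ satisfies $\phi(\gamma)\in S_\infty$ for a fixed subspace with $\dim S_\infty\le\rho_\ast<d$. \textbf{The main obstacle is exactly to derive a contradiction from this configuration:} one must show that \emph{no} spectrum of $\Omega$ can have all of its exponential vectors $\phi(\gamma)$ confined to a subspace of dimension smaller than $d$, i.e.\ that the spanning dimension $\dim V(\cdot)$ does not drop when one passes to a weak limit of translates of a spectrum. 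This is the crux, and it is the one place where \emph{completeness} of $E(\Gamma)$ (not merely orthogonality) must be used: confinement to $S_\infty$ forces $\Gamma$ into the common zero set of the $2n-\dim S_\infty$ independent exponential polynomials $\xi\mapsto\inner{\phi(\xi)}{c}$, $c\perp S_\infty$, whose frequencies are the endpoints $a_j,b_j$, and one must show this zero set is too sparse (Beurling density below $1$) to carry a spectrum, or equivalently exhibit a nonzero $f\in L^2(\Omega)$ with $\ft{f\chi_\Omega}$ vanishing on all of $\Gamma$, contradicting completeness.
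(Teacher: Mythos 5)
Your overall architecture matches the paper's: argue by contradiction, use finite complexity to extract (by a diagonal argument) a limiting configuration from a sequence of ever wider ``bad'' windows, invoke Lemma \ref{lm:tails} to show that the tiling condition \eqref{tiling-condition} survives the passage to the limit so that the limit set $\Gamma$ is again a spectrum, and then seek a contradiction with $\dim\Span\phi(\Gamma)\le\rho_*<d=\dim\Span\phi(\Lambda)$. Up to that point your argument is sound: the phase-invariance via the unitaries $D(s)$, the monotone stabilisation of $\rho(T)$, and the observation that all sub-windows of $W_k$ span the same space are all correct (the last is not even needed).

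The difficulty is that you stop exactly where the proof must be finished. You flag as ``the crux'' the claim that no spectrum of $\Omega$ can have its $\phi$-image confined to a subspace of dimension $<d$, and neither route you sketch for it works as stated. First, the functional $\xi\mapsto\inner{\phi(\xi)}{c}$ is an exponential polynomial with frequencies among the endpoints $a_j,b_j$, and the zero set of such a polynomial need not have density below $1$: for $\Omega=(0,1)$ the polynomial $1-e^{-2\pi i\xi}$ vanishes on all of $\ZZ$, a set of density $1$ which is itself a spectrum. So sparsity of the common zero set is not automatic, and proving it for the specific subspace $S_\infty$ would amount to the very statement at issue. Second, $\inner{\phi(\cdot)}{c}$ is the Fourier transform of a finite combination of point masses at the endpoints, not of a nonzero element of $L^2(\Omega)$, so it cannot be fed directly into the completeness of $E(\Gamma)$. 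The paper closes the gap with a short duality argument you are missing: since the limit set (the paper's $L$) is a spectrum, membership in it is characterised by $A(\phi(x),\phi(\ell'))=0$ for $\ell'$ in a finite generating set $L'$ of $\Span\phi(L)$; some translate $s+L'$ lies inside $\Lambda$, and since $A(\phi(u+s),\phi(v))=A(\phi(u),\phi(v-s))$ (your diagonal unitaries $D(s)$ preserve $A$), the orthogonality of $s+L'$ to a basis $\Lambda'$ of $\Span\phi(\Lambda)$ flips into $-s+\Lambda'\subseteq L$, whence $\dim\Span\phi(L)\ge d$ --- the desired contradiction. You already possess every ingredient for this step (the matrices $D(s)$ and the characterisation of membership via a generating set), but without it the proof is incomplete.
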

\begin{proof}
Suppose not, so that there is a sequence $m_k \in \Lambda$, $k=1,2,\ldots$, such that
$$
\dim \Span \phi(\Lambda\cap(m_k-k, m_k+k)) < \dim \Span \phi(\Lambda).
$$
Consider the sequence of finite sets
$$
M_k = \left[ \Lambda\cap(m_k-k, m_k+k) \right] - m_k,
$$
i.e., the sets $\Lambda\cap(m_k-k, m_k+k)$ translated so that they are centered at $0$ (therefore
they all contain $0$).
Observe that in any given interval $(-t, t)$ the sets $M_k$ may only take finitely many forms.

For $n=1,2,3,\ldots$ in turn
we look at the infinite sequence
$$
M_k \cap (-n,n),\ \ \ k=1,2,\ldots.
$$
There is an infinite sequence of $k$'s such
that all sets $M_k \cap (-n, n)$ are the same. Keep only these indices and define $L_n$
to be this common set.
In this way we define an increasing infinite sequence of sets $L_n$, $L_n \subseteq L_{n+1}$,
each of which contains $0$ and is of the form
$$
L_n = \Lambda\cap(c_n-n, c_n+n) - c_n,
$$
for some $c_n\in\Lambda$.

Let $L=\bigcup_{n=1}^\infty L_n$.
Since each finite part of $L$ is a translate of a part of $\Lambda$
it follows that the elements of $E(L)$ are orthogonal.
We now show that $E(L)$ is also complete and is thus also a spectrum of $\Omega$.

For this it suffices to show that $F(x) := \sum_{\ell \in L} \Abs{\ft{\chi_\Omega}}^2(x-\ell)=1$
for almost every $x\in\RR$. Assume for simplicity that $x\ge 0$. We have for $t>2x$
\begin{align*}
1 &\ge F(x) & \mbox{(from \eqref{packing-condition}, since $E(L)$ is an orthogonal set)}\\
 &\ge \sum_{\ell \in (-t,t)\cap L} \Abs{\ft{\chi_\Omega}}^2(x-\ell)\\
 &= \sum_{\ell \in (-t,t)\cap L_n} \Abs{\ft{\chi_\Omega}}^2(x-\ell) & \mbox{(for some $n=n(t)>t$)}\\
 &= \sum_{\ell \in \Lambda-c_n,\ \Abs{\ell}<t} \Abs{\ft{\chi_\Omega}}^2(x-\ell)\\
 &= 1 -\sum_{\ell \in \Lambda-c_n,\ \Abs{\ell}\ge t} \Abs{\ft{\chi_\Omega}}^2(x-\ell) & \mbox{(by \eqref{tiling-condition}, since $\Lambda$ is a spectrum)}\\
 &\ge 1 - \sum_{\ell \in \Lambda-c_n,\ \Abs{x-\ell}\ge t/2} \Abs{\ft{\chi_\Omega}}^2(x-\ell) & \mbox{(as $\Abs{\ell}\ge t > 2x$ implies $\Abs{x-\ell}\ge t/2$)}\\
 &= 1 - \sum_{a \in x-\Lambda+c_n,\ \Abs{a}\ge t/2} \Abs{\ft{\chi_\Omega}}^2(a) & \mbox{(with $a=x-\ell$)}\\
 &\ge 1-\frac{C}{t} & \mbox{(from Lemma \ref{lm:tails} applied to the set $x-\Lambda+c_n$)}.
\end{align*}
Letting $t\to\infty$ we obtain that $F(x)=1$ for all $x\in\RR$.
(Notice that the constant $C$ that appears above does
not depend on $n$.)

Since every finite subset of $L$ is contained in some $L_n$ it follows
that
\beql{smaller-dim}
\dim\Span\phi(L) < \dim\Span\phi(\Lambda).
\eeq

To derive a contradiction let the finite set
$\Lambda' \subseteq \Lambda$ be such that $\phi(\Lambda')$ is a basis of $\Span \phi(\Lambda)$ and
also let the finite set $L' \subseteq L$ be such that $\phi(L')$ is a basis of $\Span \phi(L)$.
Some translate $s+L'$ of the finite set $L'$ is contained in $\Lambda$, hence 
$$
A(\phi(s+\ell'), \phi(\lambda')) = 0,\ \ \ \mbox{(for all $\ell' \in L'$ and $\lambda' \in \Lambda'$)},
$$
which implies
$$
A(\phi(\ell'), \phi(-s+\lambda')) = 0,\ \ \ \mbox{(for all $\ell' \in L'$ and $\lambda' \in \Lambda'$)},
$$
and this means that $-s+\Lambda' \subseteq L$ and therefore that
$$
\dim \Span \phi(L) \ge \dim \Span \phi(-s+\Lambda') = \dim \Span \phi(\Lambda') = \dim \Span \phi(\Lambda),
$$
in contradiction with \eqref{smaller-dim}.
We have used the easy fact that $\dim\Span\phi(A+x)=\dim\Span\phi(A)$ for any $x\in\RR$, $A\subseteq\RR$.
\end{proof}

\noindent
{\bf Completion of the proof: }The set $\Lambda$ is periodic.

Let $T$ be as in Lemma \ref{lm:gen-set} and consider all subsets of $\Lambda$ of the form
$$
B_\lambda = \Lambda \cap [\lambda, \lambda+T],\ \ \ \lambda \in \Lambda.
$$
It follows from Lemma \ref{lm:gen-set} that $B_\lambda$ is a generating set for each $\lambda$.
But there are only finitely many different forms the set $B_\lambda-\lambda$ can take, hence there are
$\lambda_1, \lambda_2 \in \Lambda$, $\lambda_1 > \lambda_2$, such that
$$
B_{\lambda_1}-\lambda_1 = B_{\lambda_2}-\lambda_2,
$$
or
$$
B_{\lambda_1} = B_{\lambda_2} + \lambda_1-\lambda_2.
$$
Since $B_{\lambda_1}$ and $B_{\lambda_2}$ are both generating sets for $\phi(\Lambda)$
it follows that
\begin{align*}
x \in \Lambda &\Leftrightarrow e_x \perp e_y\ \ (y\in B_{\lambda_2})\\
 &\Leftrightarrow e_{x+\lambda_1-\lambda_2} \perp e_y \ \ (y\in B_{\lambda_1})\\
 &\Leftrightarrow x+(\lambda_1-\lambda_2) \in \Lambda.
\end{align*}
In other words, $T=\lambda_1-\lambda_2$ is a period of $\Lambda$.

Let us also remark that any period of $\Lambda$ must be an integer. This is a consequence of
the fact that $\Lambda$ has density 1: if $T$ is a period of $\Lambda$ this implies that
there are exactly $T$ elements of $\Lambda$ in each interval $[x, x+T)$ hence $T$
is an integer.

\bibliographystyle{abbrv}
\bibliography{spectral-sets}

\end{document}